\newtheorem{theorem}{\bf Theorem}
\newtheorem{proposition}[theorem]{\bf Proposition}
\theoremstyle{remark}
\newtheorem{definition}{\bf Definition}
\def\and{{\quad\text{and}\quad}}
\def\Z{{\mathbb Z}}
\def\Q{{\mathbb Q}}
\def\Qbar{{\overline \Q}}
\def\R{{\mathbb R}}
\def\C{{\mathbb C}}
\def\mod#1{{\ ({\rm mod}\ #1)}}
\subjclass{}
\email{xavier.buff@math.univ-toulouse.fr}
\address{ %
  Institut de Math\'ematiques de Toulouse\\
 Universit\'e Paul Sabatier\\
  118, route de Narbonne \\
  31062 Toulouse Cedex \\
  France }
\thanks{The research of the second author was supported in part by the NSF}
\email{kochsc@umich.edu}
\address{Department of Mathematics\\
530 Church Street\\
East Hall\\
University of Michigan\\
Ann Arbor MI 48109\\
United States }
\title{Totally real points in the Mandelbrot set}
\begin{document}

\begin{abstract}
Recently, Noytaptim and Petsche proved that the only totally real parameters $c\in \Qbar$ for which $f_c(z):=z^2+c$ is postcritically finite are $0$, $-1$ and $-2$ \cite{clay}. In this note, we show that the only totally real parameters $c\in \Qbar$ for which $f_c$ has a parabolic cycle are $\frac14$, $-\frac34$, $-\frac54$ and $-\frac74$. 
\end{abstract}

\maketitle

\section*{Introduction}
\noindent Consider the family of quadratic polynomials $f_c:\C\to \C$ defined by 
\[f_c(z) := z^2+c,\quad c\in\C.\]
The Mandelbrot set $M$ %(see Figure \ref{fig:mandel}) 
is the set of parameters $c\in \C$ for which the orbit of the critical point $0$ under iteration of $f_c$ remains bounded:
\[M:=\bigl\{c\in \C~|~\forall n\geq 1,~f_c^{\circ n}(0) \in \overline D(0,2)\bigr\}.\]

%\begin{figure}[t]
%\centerline{
%\includegraphics[width=6cm]{}
%}
%\caption{The Mandelbrot set $M$.\label{fig:mandel}}
%\end{figure}

\begin{definition}
A parameter $c\in \C$ is {\em postcritically finite} if the orbit of $0$ under iteration of $f_c$ is finite. 
\end{definition}

\begin{definition}
A parameter $c\in \C$ is {\em parabolic} if $f_c$ has a periodic cycle with multiplier a root of unity. 
\end{definition}

Postcritically finite parameters and parabolic parameters are algebraic numbers contained in $M$. More precisely, $c\in \C$ is a postcritically finite parameter if and only if $c$ is an algebraic integer whose Galois conjugates all belong to $M$ (see \cite{milnor2} and \cite{xavier}). In addition, if $c\in \C$ is a parabolic parameter, then $4c$ is an algebraic integer (see \cite{thierry}); moreover, $4c$ is an algebraic unit in $\overline \Z/2\overline \Z$ (see \cite[Remark 3.2]{milnor2}). 

\begin{definition}
An algebraic number $c\in\Qbar$  is {\em totally real} if its Galois conjugates are all in $\Qbar\cap \R$. 
\end{definition}

Recently, Noytaptim and Petsche \cite{clay} completely determined the totally real postcritically finite parameters.

\begin{proposition}[Noytaptim-Petsche]\label{prop:pcf}
The only totally real parameters  $c\in \Qbar$ for which $z\mapsto z^2+c$ is postcritically finite are $-2$, $-1$ and $0$. 
\end{proposition}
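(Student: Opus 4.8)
The plan is to combine the arithmetic characterization of postcritically finite parameters recalled in the introduction with Kronecker's theorem on algebraic integers. Suppose $c\in\Qbar$ is totally real and postcritically finite. By that characterization, $c$ is an algebraic integer all of whose Galois conjugates lie in $M$; since $c$ is totally real, these conjugates are in addition real, so they all lie in $M\cap\R$. Two classical facts about the real slice of the Mandelbrot set then apply: $M\subseteq\overline D(0,2)$, so every real point of $M$ is $\geq -2$, and the rightmost point of $M\cap\R$ is the cusp $1/4$ of the main cardioid. Hence every Galois conjugate of $c$ lies in the interval $[-2,\tfrac14]\subset[-2,2]$.

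First I would reduce to a statement about roots of unity. If $\beta$ is any real algebraic integer all of whose conjugates lie in $[-2,2]$, let $\zeta$ be a root of $X^2-\beta X+1$; then $\zeta+\zeta^{-1}=\beta$, and because each conjugate $\beta'\in[-2,2]$ forces the corresponding root $\zeta'$ onto the unit circle, $\zeta$ is an algebraic integer all of whose conjugates have modulus $1$. By Kronecker's theorem $\zeta$ is a root of unity, so $\beta=2\cos(2\pi k/n)$ for some integers $k,n$. Applying this to $c$ and reducing the fraction, we may write $c=2\cos(2\pi/n)$, and the full Galois orbit of $c$ is $\{\,2\cos(2\pi j/n):\gcd(j,n)=1\,\}$.

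The main point is then an elementary extremal estimate. Among the conjugates $2\cos(2\pi j/n)$ the largest is attained at $j=1$, namely $2\cos(2\pi/n)$, since that corresponds to the smallest angle in $(0,\pi]$ and $1$ is coprime to every $n$. The requirement that this conjugate lie in $[-2,\tfrac14]$ reads $\cos(2\pi/n)\leq \tfrac18$, which forces $2\pi/n\geq\arccos(\tfrac18)$ and hence $n\leq 4$. I would then enumerate the finitely many remaining cases: $n=1$ gives $c=2\notin[-2,\tfrac14]$ and is discarded, while $n=2,3,4$ give $c=-2,-1,0$ respectively. Each of these is a rational integer, hence trivially totally real, and each is readily checked to be postcritically finite ($0$ is fixed by $f_0$, the orbit of $0$ under $f_{-1}$ is the $2$-cycle $\{0,-1\}$, and $0\mapsto -2\mapsto 2$ is eventually fixed under $f_{-2}$). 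This yields exactly $\{-2,-1,0\}$.

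I expect the only delicate step to be the passage through Kronecker's theorem: one must check carefully that $\zeta$ is a genuine algebraic integer over $\Z$, and not merely over $\Z[\beta]$, and that \emph{every} conjugate of $\zeta$, over all embeddings, has modulus $1$. Once the reduction to $2\cos(2\pi/n)$ is justified, the extremal estimate and the final enumeration are routine.
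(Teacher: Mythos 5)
Your proof is correct and follows essentially the same route as the paper: the heart of both arguments is the substitution $c=a+1/a$, turning the interval constraint on the conjugates of $c$ into a modulus-one constraint on the conjugates of the algebraic integer $a$, followed by Kronecker's theorem and a finite enumeration (and your flagged worry about integrality over $\Z$ and about all embeddings is resolved exactly as you sketch). The only divergence is minor: the paper first uses real dynamics to shrink the interval to $[-2,0]$ (the orbit of $0$ is infinite for $c\in(0,\frac14]$) and then lists the roots of unity all of whose conjugates have nonpositive real part, whereas you work directly with $M\cap\R=[-2,\frac14]$ and rule out $n\geq 5$ via the extremal conjugate $2\cos(2\pi/n)\leq\frac14$ --- both enumerations land on $\{-2,-1,0\}$.
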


Their proof relies on the fact that the Galois conjugates of a postcritically finite parameter are also postcritically finite parameters, thus contained in $M$, and on the fact that $M\cap \R = [-2,\frac14]$ has small arithmetic capacity.  
In this note, we revisit their proof. We then determine the totally real parabolic parameters.

\begin{proposition}\label{prop:parabo}
The only totally real parameters $c\in \Qbar$ for which $z\mapsto z^2+c$ has a parabolic cycle are $\frac14$, $-\frac34$, $-\frac54$ and $-\frac74$. 
\end{proposition}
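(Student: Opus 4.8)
The plan is to adapt the Noytaptim--Petsche argument, replacing ``postcritically finite'' by ``parabolic'' and supplementing the real-slice capacity input with the $2$-adic unit condition. First I would record that the parabolic condition is Galois-invariant. If $c$ is parabolic, then $f_c$ has a cycle $\{z_0,\dots,z_{n-1}\}$ whose multiplier $\lambda=(f_c^{\circ n})'(z_0)$ is a root of unity; the $z_i$ and $c$ are algebraic, and for $\sigma\in\mathrm{Gal}(\Qbar/\Q)$ the points $\sigma(z_i)$ form a cycle of $f_{\sigma(c)}$ with multiplier $\sigma(\lambda)$, again a root of unity. Hence every Galois conjugate of a parabolic parameter is parabolic, so if $c$ is totally real and parabolic, all its conjugates are real parabolic parameters and therefore lie in $\partial M\cap\R$. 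Writing $w:=4c$, the facts quoted in the introduction give that $w$ is an algebraic integer which is a unit modulo $2$, and that every conjugate of $w$ lies in $4\cdot(\partial M\cap\R)$.

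The crucial point is to get the relevant archimedean set below capacity $1$. The crude enclosure $\partial M\cap\R\subseteq M\cap\R=[-2,\tfrac14]$ is useless, since $\mathrm{cap}\,([-8,1])=\tfrac94>1$ and Fekete--Szeg\H{o} does not apply (this is exactly where the parabolic case is harder than the postcritically finite case, for which $\mathrm{cap}\,([-2,\tfrac14])=\tfrac9{16}<1$). I would remove the two topmost real hyperbolic windows: on the interiors of the real period-$1$ window $(-\tfrac34,\tfrac14)$ and the real period-$2$ window $(-\tfrac54,-\tfrac34)$, the map $f_c$ has an attracting cycle and all other cycles are repelling, so no multiplier is a root of unity; these open intervals contain no parabolic parameters, and together they cover $(-\tfrac54,\tfrac14)\sm\{-\tfrac34\}$. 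Thus
\[\partial M\cap\R\ \subseteq\ E_0:=[-2,-\tfrac54]\ \cup\ \{-\tfrac34,\tfrac14\},\]
so every conjugate of $w$ lies in $E:=4E_0=[-8,-5]\cup\{-3,1\}$, and $\mathrm{cap}\,(E)=\mathrm{cap}\,([-8,-5])=\tfrac34<1$. By Fekete--Szeg\H{o} there are only finitely many algebraic integers with all conjugates in $E$, hence finitely many totally real parabolic parameters.

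The main obstacle is upgrading this finiteness to the exact list. The unit-mod-$2$ hypothesis immediately handles the rational integers: the odd integers in $E$ are precisely $\{-7,-5,-3,1\}$ (the value $-6$ is even, hence excluded), giving $c\in\{-\tfrac74,-\tfrac54,-\tfrac34,\tfrac14\}$. What remains is to rule out conjugate sets of degree $\ge 2$, i.e.\ genuine totally real algebraic integers $w$ of higher degree, units modulo $2$, with all conjugates in $[-8,-5]$; such $w$ do exist arithmetically (for instance a root of $x^2+13x+41$, whose conjugates are $(-13\pm\sqrt5)/2\in[-8,-5]$ and whose norm $41$ is odd), so capacity finiteness alone is not enough. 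To discard these impostors I would use the $2$-adic unit condition quantitatively: either invoke a sharpened adelic Fekete--Szeg\H{o} bound incorporating the local constraint $|w|_v=1$ at the places $v\mid 2$, or construct an explicit auxiliary polynomial in $\Z[x]$ that is small on $[-8,-5]$ with controlled $2$-adic valuation, so that the associated resultant is a nonzero rational integer which is forced to be simultaneously archimedean-small and $2$-adically large unless $\deg w=1$.

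Finally I would verify that the four surviving values are genuinely parabolic by direct multiplier computation. The fixed points of $f_c$ solve $z^2-z+c=0$ and have multipliers $1\pm\sqrt{1-4c}$, which equals $1$ at $c=\tfrac14$ and $-1$ at $c=-\tfrac34$; the period-$2$ cycle solves $z^2+z+(c+1)=0$ and has multiplier $4c+4$, which equals $-1$ at $c=-\tfrac54$; and at $c=-\tfrac74$ a real period-$3$ cycle appears through a tangent bifurcation, with multiplier $1$. This confirms $c\in\{\tfrac14,-\tfrac34,-\tfrac54,-\tfrac74\}$ and completes the identification.
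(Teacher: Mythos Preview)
Your proposal has a genuine gap that you yourself flag: you obtain only finiteness via the capacity bound $\mathrm{cap}\,([-8,-5])=\tfrac34<1$, and you do not carry out the elimination of higher-degree candidates. The remedies you sketch (a ``sharpened adelic Fekete--Szeg\H{o} bound'' or an ``explicit auxiliary polynomial'') are not executed, and there is no indication they would close the gap without substantial further work. Indeed your own example $w^2+13w+41$, corresponding to $c=\frac{-13\pm\sqrt5}{8}$, is one of the finitely many survivors and \emph{is} a unit modulo $2$, so the $2$-adic condition by itself will not discard it; some genuinely dynamical input is still needed at that point.

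The paper bypasses the enumeration difficulty by a substitution that trades Fekete--Szeg\H{o} for Kronecker's theorem. After reducing (as you do) to $c\in[-2,-\tfrac54)$, set $b:=4c+6\in[-2,1)$; this is an algebraic integer with all conjugates in $[-2,1)$. Let $a$ satisfy $a+1/a=b$, i.e.\ $a^2-ba+1=0$. Then $a$ is an algebraic integer all of whose conjugates have modulus $1$ and real part strictly less than $\tfrac12$. Kronecker's theorem forces $a$ to be a root of unity, and the constraint $\re(a)<\tfrac12$ on every primitive $n$-th root leaves only $n\in\{2,3,4,5\}$, yielding the explicit list $b\in\{-2,\,-1,\,0,\,2\cos\tfrac{2\pi}{5},\,2\cos\tfrac{4\pi}{5}\}$. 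These are then checked individually: $b=-1$ gives $c=-\tfrac74$ (parabolic); $b=-2$ gives $c=-2$ (not parabolic); $b=0$ gives $c=-\tfrac32$, ruled out since $4c=-6$ fails the unit-mod-$2$ test; and the $n=5$ case gives precisely your impostor $c=\frac{-13\pm\sqrt5}{8}$, ruled out by the direct dynamical observation that $f_c$ has an attracting period-$4$ cycle for one of the conjugates. So the higher-degree candidates are disposed of one by one from a short explicit list, not by a uniform arithmetic estimate.
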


\noindent{\bf Acknowledgments.} We thank Valentin Huguin for useful discussions, and Curtis McMullen for introducing us to these questions. 

\section{Postcritically finite parameters}

\noindent We first revisit the proof of Noytaptim and Petsche \cite{clay}. 

\begin{proof}[Proof of Proposition \ref{prop:pcf}]
Assume that $c$ is a totally real postcritically finite parameter. Then, $c$ and all of its Galois conjugates are real postcritically finite parameters, thus lie in the interval $[-2,0]$. Indeed, 
\begin{itemize}
\item for $c\in (0,\frac14)$, the orbit of $0$ under iteration $f_c$ is infinite, converging to an attracting fixed point of $f_c$; 
\item for $c=\frac14$ the orbit of $0$ under iteration $f_c$ is infinite, converging to a parabolic fixed point of $f_c$ at $z=\frac12$; 
\item for $c\in (-\infty,-2)\cup (\frac14,+\infty)$ the orbit of $0$ under iteration $f_c$ is infinite, converging to $\infty$. 
\end{itemize}

Let $a$ be a solution of  $a+1/a = c$; that is, $a^2-ca+1=0$.
Then $a$ is an algebraic integer of modulus $1$ with nonpositive real part, and all of its Galois conjugates also have modulus $1$ and nonpositive real part. 

By Kronecker's theorem, $a$ is a root of unity. And since the Galois conjugates of $a$ all have nonpositive real part, the only possibilities are the following:
\begin{itemize}
\item $a = -1$, which is mapped to $c=-2$;
\item $a = {\rm e}^{\pm {\rm i}2\pi/3}$, which is mapped to $c=-1$; 
\item $a = \pm {\rm i}$, which is mapped to $c=0$. 
\end{itemize}
Therefore, the only postcritically finite parameters  that are totally real are $-2$, $-1$, and $0$.
\end{proof} 

%A similar argument should yield the results for other unicritical families, for example,  $z\mapsto z^d+c$ or $z\mapsto 1-az^d$.

\section{Parabolic parameters}

\noindent We now present the proof of Proposition \ref{prop:parabo}. Note that $c=\frac14$, $c=-\frac34$, $c=-\frac54$ and $c=-\frac74$ are indeed parabolic parameters. Indeed, 
\begin{itemize}
\item $f_{\frac14}$ has a fixed point with multiplier $1$ at $z=\frac12$; 
\item $f_{-\frac34}$ has a fixed point with multiplier $-1$ at $z=-\frac12$; 
\item $f_{-\frac54}$ has a cycle of period $2$ with multiplier $-1$ consisting of the two roots of $4z^2 + 4z - 1$; 
\item $f_{-\frac74}$ has a cycle of period $3$ with multiplier $1$ consisting of the three roots of $8z^3 + 4z^2 - 18z - 1$.
\end{itemize}

\begin{proof}[Proof of Proposition \ref{prop:parabo}]
Assume that $c$ is a totally real parabolic parameter. Then, the Galois conjugates of $c$ also are parabolic parameters. Either $c=\frac14$, $c=-\frac34$, $c=-\frac54$, or $c$ and all of its Galois conjugates lie in the interval $[-2,-\frac54)$. Indeed, a parabolic cycle must attract the orbit of $0$ under iteration of $f_c$. However, 
\begin{itemize}
\item for $c\in (-\frac34,\frac14)$, the orbit of $0$ under iteration $f_c$ converges to an attracting fixed point of $f_c$; 
\item for $c\in (-\frac54,-\frac34)$ the orbit of $0$ under iteration $f_c$ converges to an attracting cycle of period $2$ of $f_c$; 
\item for $c\in (-\infty,-2)\cup (\frac14,+\infty)$ the orbit of $0$ under iteration $f_c$ converges to $\infty$. 
\end{itemize}

Let us assume that $c\in [-2,-\frac54)$. Then, $b:=4c+6$ and all of its Galois conjugates lie in the interval $[-2,1)$. 
Let $a$ be a solution of  $a+1/a = b$; that is, $a^2-ba+1=0$.
Then $a$ is an algebraic integer of modulus $1$ with real part less than $\frac12$, and all of its Galois conjugates also have modulus $1$ and  real part less than $\frac12$.  

By Kronecker's theorem, $a$ is a root of unity. And since the Galois conjugates of $a$ all have real part less than $\frac12$, the only possibilities are the following:
\begin{itemize}
\item $a = -1$, $b=-2$ and $c = -2$; this is not a parabolic parameter; 
\item $a = {\rm e}^{\pm {\rm i}2\pi/3}$, $b=-1$ and $c = -\frac74$; this is indeed a parabolic parameter;
\item $a = \pm {\rm i}$, $b=0$ and $c = -\frac32$; in that case $4c = -6$ is not an algebraic unit in $\overline \Z/2\overline \Z$ and so, $c$ is not a parabolic parameter;
\item $a = {\rm e}^{\pm {\rm i}2\pi/5}$,  $b=2\cos(\frac{2\pi}{5})$ and  $c = \frac{\sqrt{5}-13}{8}$; in this case, $f_c$ has an attracting cycle of period $4$ and so, $c$ is not a parabolic parameter;
\item $a = {\rm e}^{\pm {\rm i}4\pi/5}$,  $b=2\cos(\frac{4\pi}{5})$ and $c =  \frac{-\sqrt{5}-13}{8}$; then the Galois conjugate $ \frac{\sqrt{5}-13}{8}$ is not a parabolic parameter and so, $c$ is not a parabolic parameter. 
\end{itemize}
This completes the proof of the proposition. 
\end{proof}

Remark: the following proof that $-\frac32$ is not a parabolic parameter was explained to us by Valentin Huguin. It follows from \cite{thierry} that for all $n\geq 1$, 
\[{\rm discriminant}\bigl(f_c^{\circ n}(z)-z,z\bigr) = P_n(4c)\quad \text{with}\quad P_n(b)\in \Z[b]\quad \text{and}\quad \pm P_n\text{ monic}.\]
As an example, 
\[P_1(b) = -b+1 ,\quad P_2(b) =(b-1)(b+3)^3 , \quad P_3(z)= (b-1 )(b+7)^3(b^2 + b + 7)^4,\]
and
\[P_4(z)=(b - 1)(b + 3)^3(b + 5)^6(b^3 + 9b^2 + 27b + 135)^4(b^2 - 2b + 5)^5.\]
Note that this yields an alternate proof that $c=\frac14$, $c=-\frac34$, $c=-\frac54$ and $c=-\frac74$ are parabolic parameters.
In addition, 
\[P_n(0) = {\rm discriminant}\bigl(z^{2^n}-z,z\bigr) \equiv 1\mod 2.\]
As a consequence
\[P_n(-6)\equiv 1\mod 2.\]
Thus, for all $n\geq 1$, the roots of $f_{-\frac32}^{\circ n}(z)-z$ are simple, which shows that $f_{-\frac32}$ has no parabolic cycle.

%\section{The parameter $c=-\frac32$ is not parabolic}
%
%Note that the dynamics of $f_{-\frac32}$ is still not well understood. For example, the following question of Milnor  \cite{jack32} is still open. 
%
%\begin{question}
%Can one prove or disprove that  $f_{-\frac32}$ has a nonrepelling cycle?
%\end{question}
%
%However, we are able to prove the following result. 
%
%\begin{proposition}
%The quadratic polynomial $z^2 -\frac32$ has no parabolic cycle.
%\end{proposition}
%
%\begin{proof}
%According to \cite{thierry}, we have that 
%\[\forall n\geq 1, \quad {\rm discriminant}\bigl(f_c^{\circ n}(z)-z,z\bigr) = P_n(4c)\quad \text{with}\quad P_n(a)\in \Z[a].\]
%In addition, 
%\[P_n(0) = {\rm discriminant}\bigl(z^{2^n}-z,z\bigr) \equiv 1\mod 2.\]
%As a consequence
%\[P_n(-6)\equiv 1\mod 2.\]
%Thus, for all $n\geq 1$, the roots of $f_{-\frac32}^{\circ n}(z)-z$ are simple, which shows that $f_{-\frac32}$ has no parabolic cycle.
%\end{proof}

\bigskip

\end{document}